\documentclass[smallabstract,smallcaptions]{dccpaper}
\usepackage[T1]{fontenc}
\usepackage{lmodern}

\usepackage{amsmath, amsthm}
\usepackage{amssymb}
\usepackage{booktabs}
\usepackage{url}
\usepackage{tikz}
\usepackage{colortbl}
\usepackage{comment}
\usepackage{float}
\usepackage[indLines=true,noEnd=true]{algpseudocodex}
\tikzset{algpxIndentLine/.style={draw=black!35,line width=0.4pt}}

\newtheorem{proposition}{Proposition}
\newtheorem{theorem}{Theorem}
\newtheorem{lemma}{Lemma}

\newcommand{\op}[1]{\textsc{#1}}
\newcommand{\sw}[1]{\texttt{#1}}
\newcommand{\dmin}{d_{\min}}

\usepackage{color}

\begin{document}

\title
{\large
\textbf{The Prime Wavelet Tree:\\ Compressing the Carmichael Numbers}}

\author{%
Ankur Gupta$^{\ast}$, Jonathan Webster$^{\dag}$, and Sylvia Webster$^{\ast}$\\[0.5em]
{\small\begin{minipage}{\dimexpr\linewidth-2\tabcolsep\relax}\begin{center}
\begin{tabular}{ccc}
$^{\ast}$Dept.\ of Computer Science and & \hspace*{0.3in} & $^{\dag}$Dept.\ of Mathematical Sciences \\
Software Engineering && Butler University \\
Butler University && Indianapolis, IN, USA \\
Indianapolis, IN, USA && \url{jewebste@butler.edu} \\
\url{agupta@butler.edu} && \\
\url{sjwebster@butler.edu} && \\
\end{tabular}
\end{center}\end{minipage}}
}

\maketitle
\thispagestyle{empty}

\begin{abstract}
We present two number-theoretic compression techniques.
The first is \textit{local}, and compresses individual numbers.
The second is \textit{global}, and compresses the entire set of numbers using a \textit{prime wavelet tree}, which is a data structure of independent interest.
Our techniques apply to any set of numbers satisfying a Korselt-like criterion.
We demonstrate our compression techniques using the recently completed tabulation of all $308{,}279{,}939$ Carmichael numbers less than $10^{24}$ that occupies $18.4$ gigabytes as text.
When combined with standard compression techniques and a heuristic choice of divisors, the resulting file is $588.7$ megabytes ($31.2$ times smaller than the text file).
The initial $80$-bit numbers now use about $15.3$ bits.
\end{abstract}

\section{Introduction}\label{sec:intro}

Computational number theorists continue to produce large data sets for pure mathematicians and other interested researchers.
However, the size of the data can become an obstacle to use in practice.
We focus on the recent tabulation~\cite{shallue-webster2026} of all Carmichael numbers (CNs) less than $10^{24}$, which produced $308{,}279{,}939$ numbers, each listed with its full prime factorization, in an $18.4$ GB text file that researchers must parse.
For example,~\cite{ours-go-to-211} searched for subset products that were also Carmichael numbers.
Running \textsc{gzip -9} on this file achieves a very mild compression ratio of $2.54\times$, since text-based compression cannot exploit the underlying arithmetic structure of CNs.


Our key contributions employ an \sw{ABC} framework, where \sw{A} and \sw{B} represent two novel number-theoretic approaches to compression.
The first, \sw{A}, is local, and defines the quantity $\dmin(n)$, a divisor of $n$ that can be used to recover $n$.
The second, \sw{B}, is global and uses the \textit{prime wavelet tree}, which encodes each number as a path of the primes of one of its divisors.
The third, \sw{C}, is a toolkit of encoding techniques applied judiciously to whatever \sw{A} and \sw{B} store.

The rest of the paper is organized as follows.
In Section~\ref{sec:cn}, we provide required number-theoretic context for CNs. 
This leads naturally to Section~\ref{sec:cnd}, where the local divisor-based representation 
of $n$ is introduced.
In Section~\ref{sec:pwt}, we introduce the prime wavelet tree, which encodes each number by the primes of one of its divisors, as a path in a tree that all numbers share.
Section~\ref{sec:all_methods} briefly reviews standard compression techniques, defines \sw{C}, and measures all eight combinations of \sw{A}, \sw{B}, and \sw{C} on the full table of CNs, together with their decode and factoring times.
Using all three approaches at once produces a $588.7$ megabytes file (about $15.3$ bits per element).
Every CN and its factorization are recovered in $11$ minutes on a single core.


For this paper, all sizes are expressed in exact byte counts from full-table runs (where one gigabyte is $10^9$ bytes).
While our immediate concern is to represent CNs, any tabulation of numbers that satisfies a Korselt-like criterion (see Proposition~\ref{prop:korselt}) could take advantage of the techniques presented here, because they exploit local and global divisibility conditions.
Some explicit examples include the base-$2$ Fermat pseudoprimes \cite{Feitsma13}, absolute Lucas pseudoprimes \cite{helm_webster}, or Lucas-Carmichael numbers \cite{oeis}.

\section{Carmichael numbers}\label{sec:cn}

Fermat's little theorem says that if $n$ is prime, then $b^n \equiv b \pmod{n}$ for every integer $b$.
Its contrapositive gives a fast test that separates most composite numbers from primes.
This test fails for \textit{Carmichael numbers} (CN), composite numbers $n$ such that $b^n \equiv b \pmod{n}$ for every integer $b$.
The smallest CN is $561 = 3 \cdot 11 \cdot 17$.

Carmichael numbers are most frequently understood by their prime factors.
Many comparisons are made between primes and Carmichael numbers, but algorithmically, they are very different problems.
Very few applications would value a table of primes be constructed to share with a wider audience.
In practice, a prime sieve is so efficient that it could generate the primes locally on demand.
This is not true for CNs.

\begin{proposition}[Korselt's criterion]\label{prop:korselt}
$n$ is a Carmichael number if and only if $n$ is composite, square-free, and $(p-1) \mid (n-1)$ for every prime $p \mid n$.
\end{proposition}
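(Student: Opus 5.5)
The proof splits into the two directions of the equivalence, using only elementary facts: the Chinese Remainder Theorem and the existence of primitive roots modulo a prime.

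For the ``if'' direction, suppose $n$ is composite, square-free, and $(p-1)\mid(n-1)$ for every prime $p\mid n$. Fix an integer $b$ and a prime $p\mid n$.
\begin{itemize}
\item If $p\mid b$, then $b^n\equiv 0\equiv b \pmod p$.
\item Otherwise, Fermat's little theorem gives $b^{p-1}\equiv 1\pmod p$. Since $(p-1)\mid(n-1)$, we get $b^{n-1}\equiv 1$ and hence $b^n\equiv b\pmod p$.
\end{itemize}
Thus $p\mid b^n-b$ for every prime $p\mid n$. Because $n$ is square-free, $n$ is the product of these distinct primes, so the Chinese Remainder Theorem (or simply pairwise coprimality) gives $n\mid b^n-b$.

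For the ``only if'' direction, let $n$ be a Carmichael number. It is composite by definition, so two things remain to show.

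\emph{Square-freeness.} Suppose $p^2\mid n$ for some prime $p$, and take $b=p$. Then $p^n\equiv p\pmod{p^2}$. But $n\ge 2$, so $p^2\mid p^n$, which forces $p^2\mid p$. This is a contradiction.

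\emph{The divisibility condition.} Fix a prime $p\mid n$ and a primitive root $g$ modulo $p$. Write $n=pm$ with $\gcd(p,m)=1$, which holds by square-freeness. By the Chinese Remainder Theorem, choose $b$ with $b\equiv g\pmod p$ and $b\equiv 1\pmod m$. The Carmichael property gives $b^n\equiv b\pmod n$, and reducing modulo $p$ yields $g^n\equiv g\pmod p$. Since $g$ is invertible modulo $p$, this becomes $g^{n-1}\equiv 1\pmod p$. As $g$ has order exactly $p-1$, we conclude $(p-1)\mid(n-1)$.

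The main obstacle is this last step. The Carmichael hypothesis is a congruence that holds for all $b$ at once, and we must extract from it a statement about the multiplicative order at a single prime. Choosing $b$ to be a primitive root modulo $p$ resolves this, since its order is exactly $p-1$. The auxiliary condition $b\equiv 1\pmod m$ is not actually needed, because any integer $b\equiv g\pmod p$ already works after reducing modulo $p$. The essential input is therefore the cyclicity of $(\mathbb{Z}/p\mathbb{Z})^\times$.
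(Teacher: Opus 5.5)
Your proof is correct and complete. It is the standard textbook argument: for the ``if'' direction, Fermat's little theorem at each prime $p\mid n$, combined using square-freeness; for the ``only if'' direction, $b=p$ to get square-freeness and a primitive root modulo $p$ to get $(p-1)\mid(n-1)$.

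The paper does not prove this proposition. It quotes Korselt's criterion as classical background, so there is no alternative route to compare against.

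One point about definitions. Your square-freeness step depends on the paper's form of the definition, $b^n\equiv b\pmod n$ for \emph{every} integer $b$, since that is what makes $b=p$ admissible. Under the common alternative definition ($b^{n-1}\equiv 1\pmod n$ only for $\gcd(b,n)=1$), you would instead:
\begin{itemize}
\item choose $b\equiv 1+p\pmod{p^2}$ and $b\equiv 1$ modulo the prime-to-$p$ part of $n$;
\item use $(1+p)^{n-1}\equiv 1+(n-1)p\pmod{p^2}$ to force $p\mid n-1$, which contradicts $p\mid n$.
\end{itemize}
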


We state one key asymptotic result on the count of Carmichael numbers.
\begin{theorem}[PSW-bound \cite{pom_self}]
The count $C(B)$ of Carmichael numbers less than $B$ satisfies
\[ C(B) \leq B \exp \left\{ \frac{-(1 - o(1))\log B \log \log \log B}{ \log \log B }\right\}.\]
We let $\mathcal{E}(B)$ denote this bound.
\end{theorem}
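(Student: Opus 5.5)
The plan follows Pomerance's classical argument behind this bound, which rests on Korselt's criterion (Proposition~\ref{prop:korselt}). Fix a parameter $L=L(B)=\exp\{\log B\,\log\log\log B/\log\log B\}$. We split the Carmichael numbers $n<B$ into three classes.
\begin{itemize}
\item[(i)] Those with $n\le B/L$. There are trivially at most $B/L$ of them, which is within $\mathcal{E}(B)$.
\item[(ii)] Those with a prime factor $p>L$, or more generally with a large value of $k=(n-1)/(p-1)$ relative to $p$.
\item[(iii)] Those with $n>B/L$ all of whose prime factors are small, i.e.\ $L$-smooth.
\end{itemize}

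For class (ii) we use Korselt's criterion directly. If $p\mid n$, then $n\equiv 0\pmod p$, and $(p-1)\mid(n-1)$ gives $n\equiv 1\pmod{p-1}$. Since $\gcd(p,p-1)=1$, the Chinese remainder theorem pins $n$ to a single residue class modulo $p(p-1)$. Hence the number of such $n<B$ is at most $B/(p(p-1))+1$. Summing over $p>L$ gives $O(B/L)$. The $+1$ terms need care: for $p>\sqrt B$ they are handled by noting that $n=pm$ with $m<B/p$ and $m\equiv 1\pmod{p-1}$, which forces $m=1$ and so $n$ would be prime. Such $n$ are therefore excluded.

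The main work is class (iii), which I expect to be the principal obstacle. The idea is that $\lambda(n)=\operatorname{lcm}_{p\mid n}(p-1)$ divides $n-1$, and for $L$-smooth squarefree $n$ one wants $\lambda(n)$ to be large. I would follow Erdős--Pomerance: group the $n$ according to the value $d=\lambda(n)$. Since $n\equiv 1\pmod d$, each group has at most $B/d$ elements. Then split on whether $d$ is large, say $d>L^{c}$, or small. If $d$ is large, summing $B/d$ over such $d$ is bounded using the count of integers with all prime factors of $d$ small. If $d$ is small, then every prime $p\mid n$ satisfies $(p-1)\mid d$, so $n$ is a squarefree product of primes from the set $\{p: (p-1)\mid d\}$. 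That set has size at most $\tau(d)$. Because $n>B/L$ is built only from these primes, counting subsets gives at most $2^{\tau(d)}$ possibilities. The estimate for this count comes from Prachar's theorem, which says there are $d$ with many divisors, together with the bound $\tau(d)\le \exp\{(1+o(1))\log d\log 2/\log\log d\}$.

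Balancing these contributions requires the de Bruijn / Canfield--Erdős--Pomerance estimate $\Psi(x,y)=x\,u^{-(1+o(1))u}$ with $u=\log x/\log y$. With $y=L$ and $x=B$ we get $u=\log\log B/\log\log\log B$, so $u^{-u}=\exp\{-(1+o(1))\log B\log\log\log B/\log\log B\}$, matching $\mathcal{E}(B)$. The delicate step is choosing $c$ and tracking the $o(1)$ terms so that the small-$d$ count, roughly $\sum_{d\le L^c}2^{\tau(d)}$, stays below $B/L^{1-o(1)}$. I would first try taking $c$ slightly less than $1$ and use the trivial bound $\tau(d)\le d^{o(1)}$. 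If that is too weak, I would fall back on Pomerance's sharper treatment via the count of $n$ with $\lambda(n)$ small.

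Since this theorem is cited from the literature in the present paper, this sketch only indicates the route. The full details are in the original reference.
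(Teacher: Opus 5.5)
Your classes (i) and (ii) are fine. Everything then rests on class (iii), the $L$-smooth Carmichael numbers in $(B/L,B]$, and none of the tools you propose for it works.

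\emph{The numerology is wrong.} With $y=L$ one has $u=\log\log B/\log\log\log B$, hence $u\log u=(1+o(1))\log\log B$. So $\Psi(B,L)=B(\log B)^{-1+o(1)}$, not $\mathcal{E}(B)$. The quantity $\log B\log\log\log B/\log\log B$ equals $\log B/u=\log L$, not $u\log u$. Smoothness of $n$ therefore saves only a power of $\log B$.

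\emph{Large $d=\lambda(n)$.} Grouping by $d$ with the bound $B/d$ cannot work here. We have $\sum_{L^c<d\le B}B/d\sim B\log B$, and restricting to the $L$-smooth values that $\lambda(n)$ can take in class (iii) still leaves a sum $\gg B\log L$. The congruence $n\equiv 1\pmod{\lambda(n)}$ alone, summed over all possible values of $\lambda(n)$, never beats the trivial bound. For large $\lambda(n)$ the useful congruence is the divisor one behind Proposition~\ref{prop:dmin}. Since $\lambda(m)\ge\lambda(n)m/n$ for $m\mid n$, the first prefix product $m$ of the primes of $n$ exceeding $n/\sqrt{\lambda(n)}$ is stopping and determines $n$. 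It also satisfies $m\le BL/\sqrt{\lambda(n)}$, so at most $B/L$ numbers in class (iii) have $\lambda(n)\ge L^4$.

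\emph{Small $d$.} Here the bound $2^{\tau(d)}$ fails outright. Some $d\le L^c$ have $\log\tau(d)\asymp\log B\log\log\log B/(\log\log B)^2$. For such $d$, $\tau(d)$ is far larger than $\log B/\log 2$, so that single term already exceeds $B$. Prachar's theorem manufactures exactly such $d$: it is a lower-bound tool and pushes in the wrong direction here.

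\emph{What is missing} is the actual source of the factor $L(B)$: the multiplicative structure of the shifted primes $p-1$, not smoothness of $n$. The standard input, going back to Erd\H{o}s and made sharp by Pomerance, is a Rankin-type count, uniform in $k\le B$:
\[ \#\{n\le B \text{ squarefree} : p\mid n \Rightarrow (p-1)\mid k\}\le B^{c}\prod_{(p-1)\mid k}\bigl(1+p^{-c}\bigr). \]
Take $c=1-\log\log\log B/\log\log B$. Then $B^{c}=B/L(B)$. Since $k$ has only $O(\log B/\log\log B)$ prime factors, $\sum_{(p-1)\mid k}p^{-c}\le\sum_{e\mid k}e^{-c}\le(\log B)^{o(1)}$, so the product is $L(B)^{o(1)}$. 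Here $\log B\log\log\log B/\log\log B$ is exactly $(1-c)\log B$, which is where the shape of $\mathcal{E}(B)$ comes from. This uniform bound still has to be combined with the Korselt congruences without losing a factor for every possible value of $\lambda(n)$. That combination is the real content of the cited proof, and your sketch has no substitute for it.

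Quoting the theorem, as the paper does, is fine, but the route you indicate would not reach $\mathcal{E}(B)$.
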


In \cite{pom_self, dist_psp}, Pomerance gives a heuristic argument that $\mathcal{E}(B)$ also provides a lower bound on $C(B)$.
It might be somewhat surprising then that there is a tabulation algorithm whose running time is $\mathcal{E}(B)$~\cite[Theorems 3 and 4]{shallue-webster2026}, thus it is optimal if Pomerance's heuristic lower bound holds.
Even so, the most recent table of CNs, produced by a related algorithm, took about $150$ core-years of compute~\cite{shallue-webster2026}.

We do not want others to redo this tabulation, as a correct but impractical decompression algorithm.
Our goal is to make the table as accessible as possible.
The first major improvement is to avoid duplicating the storage of $n$ and its prime factorization.
It is trivial to reconstruct $n$ from its prime factors and confirm that the constructed number is a CN.
It is also the case that if $n$ is a CN, its prime factorization may be found in polynomial time (see \cite[Section 3]{shallue-webster2026} for precise statements).
For CNs, we avoid the generic integer factorization problem (which is widely regarded as a difficult problem),
and this special-purpose factoring algorithm plays a key role in our decoding algorithms.
Most of our improvements the compression ratio come from identifying and removing redundant divisibility information among these numbers.

\section{Local compression: smaller divisors}\label{sec:cnd}

If we know $p$ divides $n$, we should be able to represent $n$ without duplicating the fact that $(p-1) \mid (n-1)$.
Let $\lambda(n) = \mathrm{lcm}(p_1{-}1, \ldots, p_j{-}1)$, then Korselt's criterion says CNs satisfy $n \equiv 1 \pmod{\lambda(n)}$.
Writing $n = dr$ and using $\lambda(d) \mid \lambda(n)$, we have
\[ n \equiv 1 \pmod{\lambda(n)}  \quad \mbox{ implies } \quad   dr \equiv 1 \pmod{\lambda(d)} .\]
Let $0 \leq r^\star \leq \lambda(d)$ be $d^{-1} \pmod{ \lambda(d)}$, then $n = d( r^\star + k\lambda(d))$ for some integer $k \geq 0$.

Being able to represent $n = d( r^\star + k\lambda(d))$ allows a representation of the form $n = (d,k)$ for any divisor $d > 1$ of $n$. 
For example, $561$ has $7$ representations: $(561, 0)$, $(33, 1)$, $(51,0)$, $(187,0)$, $(3, 93)$, $(11, 5)$, $(17,2)$.
It is straightforward to obtain $n$ from $(d,k)$.
Factor $d$, compute $\lambda(d)$, compute $r^\star$, and finally, compute $n = d( r^\star + k\lambda(d))$.

Experimentation showed that the parameter~$k$ is not very compressible.
We therefore minimize $k$ and choose a representation of $n$ for which $k = 0$.

\begin{proposition}\label{prop:dmin}
Let $n$ be a CN and $d \mid n$ a divisor with $d \cdot \lambda(d) > n$. Then $k = 0$.
\end{proposition}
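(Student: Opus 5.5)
The argument is a direct size comparison using the representation $n = d(r^\star + k\lambda(d))$ from the discussion above; it needs nothing beyond that and the defining range of $r^\star$.

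Set $r = n/d$, the cofactor. Since $d \mid n$ and $n$ is a CN, the congruence derived from Korselt's criterion (Proposition~\ref{prop:korselt}) gives $dr \equiv 1 \pmod{\lambda(d)}$. In particular $\gcd(d,\lambda(d)) = 1$, so $r^\star \equiv d^{-1} \pmod{\lambda(d)}$ is well defined. Multiplying the congruence by $r^\star$ yields $r \equiv r^\star \pmod{\lambda(d)}$. Because $r \geq 1$ and $r^\star$ is the least nonnegative residue in this class, we can write $r = r^\star + k\lambda(d)$ with $k \geq 0$, matching the representation $n = (d,k)$.

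Now use the hypothesis $d\,\lambda(d) > n = dr$. Dividing by $d > 0$ gives $\lambda(d) > r$. Since $r = r^\star + k\lambda(d) \geq k\lambda(d)$, we get $k\lambda(d) < \lambda(d)$, hence $k < 1$. With $k \geq 0$ an integer, $k = 0$.

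The only step needing care is the convention for $r^\star$. The paper writes $0 \le r^\star \le \lambda(d)$, but $r^\star$ should be the least nonnegative residue, i.e.\ $r^\star < \lambda(d)$. That is what makes $k$ well defined and nonnegative. In the degenerate case $\lambda(d) = 1$ (e.g.\ $d = 2$, which cannot divide a CN anyway) the hypothesis would force $r < 1$, which is impossible, so the case never arises. Apart from this, the proof is the one-line inequality $\lambda(d) > r \ge k\lambda(d)$.
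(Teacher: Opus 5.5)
Your proof is correct and is essentially the paper's argument. The paper assumes $k>0$ and derives $n = d(r^\star + k\lambda(d)) > d\lambda(d) > n$, a contradiction, which is the contrapositive form of your inequality $\lambda(d) > r \ge k\lambda(d)$. Your added checks that $r^\star < \lambda(d)$ and $k \ge 0$ fill in details the paper takes for granted.
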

\begin{proof}
Let $d$ be a divisor of a CN $n$ with $d \cdot\lambda(d) > n$.
Then $n = d\cdot r^\star$.
If $k > 0$, then $n = d(r^\star + k\lambda(d)) > d \cdot\lambda(d) > n $, a contradiction.
\end{proof}

Call a divisor $d$ of $n$ \emph{stopping} if $d \cdot \lambda(d) > n$, and let $\dmin(n)$ be the smallest stopping divisor of $n$.
So, $\dmin(561)= 51$.
We replace the table of $n$ with $\dmin(n)$;
the two sets are in bijection.
The original file had $80$-bit numbers with a median at $78$-bits.
The set of $\dmin(n)$ are $70$-bit numbers with a median of $50$-bits.
On average, we get significant compression.
A few numbers resist this compression.
Consider $n = 109 \cdot 113 \cdot 193 \cdot 337 \cdot 433 \cdot 449 \cdot 577 \cdot 757 \cdot 12{,}097$ with $\lambda(n) = 12{,}096$.
Then, $n$ has $\dmin(n) = n/757$ and this encoding keeps $70$ of the $80$ bits of $n$, matching the worst case.

Representing $\dmin(n)$ in raw binary means that we no longer have easy access to its prime factors.
Decoding requires generic integer factorization, which cannot currently be accomplished in polynomial time.
Once $r^\star$ is obtained we know $dr^\star$ is a CN, and we may use \cite[Algorithm 1]{shallue-webster2025} to factor $r^\star$ in polynomial time.

\section{The prime wavelet tree}\label{sec:pwt}

The wavelet tree of Grossi, Gupta, and Vitter~\cite{ggv2003} represents a sequence over an alphabet $\Sigma$ by a tree whose nodes split the alphabet of their subtree
and store bitvectors recording the part each symbol falls in.
Shaping the tree by the access frequencies cuts its average query time~\cite{ggv2004}, and allowing nodes with more than two children reduces its height~\cite{fmmn2007}.
See Navarro~\cite{navarro2014} for a survey of its myriad uses and variations.

We describe the \emph{prime wavelet tree}, which is a prefix tree over the prime divisors of a stopping number for each CN induced by some fixed ordering of these primes.
As with a wavelet tree or wavelet trie~\cite{go2012} of strings, each level of the tree refines (reduces) the available alphabet of symbols.
In our case, each node stores the primes of its children rather than a bitvector.
The prime wavelet tree is an indexable dictionary~\cite{ghsv2007} that also supports $\op{select}(i)$, $\op{member}(n)$, and $\op{prank}(n)$.
We defer the details of how these functions work, since they have little value for computational number theorists.

\subsection{Definitions}\label{sec:pwt-struct}
Let $\mathcal{C}_B$ be the set of CNs less than $B$.
The tree stores one stopping divisor for each $n \in \mathcal{C}_B$, $\dmin(n)$ unless we say otherwise, and the \emph{prime set} $\pi(n)$ is the set of primes of this stored divisor.
The tree construction fixes a total order $\prec$ on the primes, the \emph{depth order}, and the \emph{path} of $n$ lists $\pi(n)$ in this order.

The \emph{prime wavelet tree} $T = T(\mathcal{C}_B)$ has a node $v_S$ for every set $S$ of the first $h$ primes of some path ($0 \le h \le |\pi(n)|$), which $\prec$ determines as a sequence.
The root is $v_\emptyset$, and the children of $v = v_S$ are the nodes $vq = v_{S \cup \{q\}}$ of $T$ with $q$ after every prime of $S$ in $\prec$, listed in a \emph{sibling order} that the construction also fixes.
The \emph{value-ordered} tree lists each path by decreasing prime but the children of each node by increasing prime.
Figure~\ref{fig:pwt} shows a small example of the tree induced by this ordering.

A set $S$ has the divisor $d_S = \prod_{p \in S} p$ and $\lambda_S = \lambda(d_S) = \mathrm{lcm}\{p - 1 \mid p \in S\}$, with $d_\emptyset = \lambda_\emptyset = 1$.
At the node $v = v_S$ we write these quantities as $d_v$ and $\lambda_v$.
The depth of $v$ is $h = |S|$.
$p_v$ is the prime that $v$ adds to its parent's set, the last prime of $S$ in the ordering $\prec$.\footnote{And $p_v = \bot$ at the root.}
$v_S$ is \emph{terminal} if $S = \pi(n)$ for some $n \in \mathcal{C}_B$.
All leaves must be terminal, and internal nodes may be terminal as well, though Figure~\ref{fig:pwt} does not show any such instances.
The \emph{path order} of $\mathcal{C}_B$ lists its elements by a pre-order traversal of $T$ that visits each node before its children, and the children in sibling order.

\begin{figure}[H]
\begin{center}
\definecolor{termfill}{rgb}{0.98,0.80,0.50}%
\begin{tikzpicture}[x=1mm,y=1mm,
  every node/.style={circle,draw,inner sep=0.4pt,minimum size=4.4mm,font=\scriptsize},
  term/.style={fill=termfill}]
\node (r) at (70.5,6) {$1$};
\node (a17) at (4.5,-9) {17}; \node (a19) at (20.5,-9) {19}; \node (a23) at (32,-9) {23};
\node (a29) at (48,-9) {29}; \node (a31) at (68.5,-9) {31}; \node (a37) at (93.5,-9) {37};
\node (a41) at (127.5,-9) {41};
\node[term] (b1) at (0,-18) {3};   \node[term] (b2) at (9,-18) {5};
\node[term] (b3) at (20.5,-18) {7};  \node[term] (b4) at (32,-18) {7};
\node[term] (b5) at (43.5,-18) {5};  \node (b6) at (52.5,-18) {19};
\node[term] (b7) at (64,-18) {7};  \node[term] (b8) at (73,-18) {17};
\node (b9) at (84.5,-18) {13};       \node (b10) at (93.5,-18) {23};       \node (b11) at (102.5,-18) {31};
\node[term] (b12) at (114,-18) {13}; \node (b13) at (123,-18) {17};    \node (b14) at (132,-18) {29}; \node (b15) at (141,-18) {31};
\node[term] (c6) at (52.5,-27) {5};
\node[term] (c9) at (84.5,-27) {7};  \node[term] (c10) at (93.5,-27) {5};  \node[term] (c11) at (102.5,-27) {19};
\node[term] (c13) at (123,-27) {13}; \node[term] (c14) at (132,-27) {13}; \node[term] (c15) at (141,-27) {11};
\foreach \a in {a17,a19,a23,a29,a31,a37,a41} \draw (r) -- (\a);
\draw (a17)--(b1) (a17)--(b2) (a19)--(b3) (a23)--(b4) (a29)--(b5) (a29)--(b6)
      (a31)--(b7) (a31)--(b8) (a37)--(b9) (a37)--(b10) (a37)--(b11)
      (a41)--(b12) (a41)--(b13) (a41)--(b14) (a41)--(b15);
\draw (b6)--(c6) (b9)--(c9) (b10)--(c10) (b11)--(c11) (b13)--(c13) (b14)--(c14) (b15)--(c15);
\end{tikzpicture}\\[0.4em]
{\scriptsize
\begin{tabular}{@{}lrrrr@{}}
\toprule
path & $d$ & $\lambda(d)$ & $r^\star$ & $n = d\cdot r^\star$ \\
\midrule
$17{\to}3$ & 51 & 16 & 11 & 561 \\
$17{\to}5$ & 85 & 16 & 13 & 1{,}105 \\
$19{\to}7$ & 133 & 18 & 13 & 1{,}729 \\
$23{\to}7$ & 161 & 66 & 41 & 6{,}601 \\
$29{\to}5$ & 145 & 28 & 17 & 2{,}465 \\
$29{\to}19{\to}5$ & 2{,}755 & 252 & 163 & 449{,}065 \\
$31{\to}7$ & 217 & 30 & 13 & 2{,}821 \\
$31{\to}17$ & 527 & 240 & 143 & 75{,}361 \\
\bottomrule
\end{tabular}\hspace{1.5em}
\begin{tabular}{@{}lrrrr@{}}
\toprule
path & $d$ & $\lambda(d)$ & $r^\star$ & $n = d\cdot r^\star$ \\
\midrule
$37{\to}13{\to}7$ & 3{,}367 & 36 & 19 & 63{,}973 \\
$37{\to}23{\to}5$ & 4{,}255 & 396 & 247 & 1{,}050{,}985 \\
$37{\to}31{\to}19$ & 21{,}793 & 180 & 97 & 2{,}113{,}921 \\
$41{\to}13$ & 533 & 120 & 77 & 41{,}041 \\
$41{\to}17{\to}13$ & 9{,}061 & 240 & 61 & 552{,}721 \\
$41{\to}29{\to}13$ & 15{,}457 & 840 & 673 & 10{,}402{,}561 \\
$41{\to}31{\to}11$ & 13{,}981 & 120 & 61 & 852{,}841 \\
\\
\bottomrule
\end{tabular}}
\end{center}
\caption{\label{fig:pwt}%
Value-ordered prime wavelet tree of all 15 CNs $n$ whose prime set $\pi(n)$ has largest prime of at most 41.
An amber node is terminal.}
\end{figure}

We justify why the word ``wavelet'' applies, in spirit, to this data structure.

\begin{lemma}\label{lem:alpha}
Every child $vq$ of a node $v$ satisfies $q \nmid \lambda_v$ and $\gcd(q - 1, d_v) = 1$.
\end{lemma}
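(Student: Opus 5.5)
The plan is to reduce both claims to Korselt's criterion (Proposition~\ref{prop:korselt}) applied to a single Carmichael number lying below the child. Since $vq$ is a node of $T$, its set $S \cup \{q\}$ is the set of the first $h+1$ primes of the path of some $n \in \mathcal{C}_B$. Hence $d_{vq} = d_v \cdot q$ divides the stored stopping divisor of $n$, and so it divides $n$. In particular $q \mid n$ and every prime $p \in S$ divides $n$. Also $q \notin S$, because $q$ comes strictly after every prime of $S$ in $\prec$ and the path lists distinct primes. So $q \neq p$ for all $p \in S$, and these are distinct primes dividing $n$.

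For the first claim, I suppose for contradiction that $q \mid \lambda_v = \mathrm{lcm}\{p-1 : p \in S\}$. Since $q$ is prime, it divides some single $p-1$ with $p \in S$. Korselt's criterion gives $(p-1) \mid (n-1)$, so $q \mid n-1$. But we already have $q \mid n$, so $q \mid 1$, which is impossible.

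For the second claim, I suppose some prime $p$ divides $\gcd(q-1, d_v)$. Then $p \in S$, so $p \mid n$. Applying Korselt to $q$ gives $(q-1) \mid (n-1)$, hence $p \mid n-1$. Together with $p \mid n$ this is again a contradiction. The two claims are symmetric: the first is Korselt applied to the primes $p \in S$, and the second is Korselt applied to the new prime $q$. Both reduce to the standard fact that no prime factor of a Carmichael number divides $p'-1$ for any other prime factor $p'$.

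The only step needing care is the first one: justifying that every node, including non-terminal internal ones, lies on the path of some actual $n$. This follows directly from the definition of $T$, where nodes are indexed by prefixes of paths. I also need that the stored divisor divides $n$ and that square-freeness makes its primes distinct. I expect no real obstacle beyond stating this bookkeeping carefully.
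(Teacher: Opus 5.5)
Your proposal is correct and follows the paper's argument: choose an $n$ whose path passes through $vq$, then apply Korselt's criterion to the primes of $d_v$ for the first claim and to $q$ for the second. The only difference is presentational. The paper concludes directly from $\lambda_v \mid n-1$ and $\gcd(q-1,d_v) \mid \gcd(n-1,n)=1$, where you argue through a single prime factor of the lcm or of the gcd.
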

\begin{proof}
Choose $n \in \mathcal{C}_B$ whose path passes through $vq$.
Each prime $p$ of $d_v$ divides $n$, so $(p - 1) \mid (n - 1)$ by Proposition~\ref{prop:korselt}, and therefore $\lambda_v \mid n - 1$.
Since $q \mid n$, $q \nmid (n - 1)$, so $q \nmid \lambda_v$.
Symmetrically, $(q - 1) \mid (n - 1)$ and $d_v \mid n$, so $\gcd(q - 1, d_v)$ divides $\gcd(n - 1, n) = 1$.
\end{proof}

Each node draws its children from a smaller alphabet than its parent does, as in a wavelet tree.
The child primes of $v$ are among the primes that Lemma~\ref{lem:alpha} does not exclude and that follow $p_v$ in $\prec$ (at the root, any prime; in the value-ordered tree, a prime below $p_v$).
For example, the root child 31 in Figure~\ref{fig:pwt} has $\lambda_v = 30$, so its children cannot include 3 or 5.
Our coders do not use the exclusions of Lemma~\ref{lem:alpha}.
Lemma~\ref{lem:decode} shows that the prime wavelet tree~$T$ correctly represents all CN in $\mathcal{C}_B$.

\begin{lemma}\label{lem:decode}
For every $n \in \mathcal{C}_B$, the terminal node $v = v_{\pi(n)}$ determines $n = d_v \cdot r_v^\star$, where $r_v^\star = d_v^{-1} \bmod \lambda_v$.
Hence $n \mapsto v_{\pi(n)}$ is a bijection from $\mathcal{C}_B$ onto the terminal nodes of $T$.
\end{lemma}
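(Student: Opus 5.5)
The plan is to reduce the first claim to Proposition~\ref{prop:dmin} and the discussion preceding it, and then obtain the bijection by exhibiting a two-sided inverse on terminal nodes.

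\textbf{Decoding.} Fix $n \in \mathcal{C}_B$ and let $d$ be its stored stopping divisor, so that $\pi(n)$ is the set of primes of $d$. Because $n$ is square-free by Proposition~\ref{prop:korselt}, its divisor $d$ is square-free too. Hence $d = \prod_{p \in \pi(n)} p = d_v$ for $v = v_{\pi(n)}$, and consequently $\lambda(d) = \lambda_v$.

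By the computation in Section~\ref{sec:cnd}, $n \equiv 1 \pmod{\lambda(n)}$ together with $\lambda(d) \mid \lambda(n)$ gives $d r \equiv 1 \pmod{\lambda(d)}$, where $r = n/d$. In particular $d$ is invertible modulo $\lambda_v$, so $r_v^\star$ is well defined, and $n = d_v(r_v^\star + k\lambda_v)$ for some $k \ge 0$. Since $d$ is stopping, Proposition~\ref{prop:dmin} forces $k = 0$, which gives $n = d_v r_v^\star$.

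One small point needs checking. The normalization $0 \le r^\star \le \lambda(d)$ in Section~\ref{sec:cnd} should really be read as the least nonnegative residue. The degenerate case $\lambda_v = 1$ would give $r^\star = 0$, and this cannot occur, because $d > 1$ has an odd prime factor $p$ (Carmichael numbers are odd), so $\lambda_v \ge p - 1 \ge 2$.

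\textbf{Bijection.} The map $n \mapsto v_{\pi(n)}$ lands in the terminal nodes. The node $v_{\pi(n)}$ exists in $T$ because $h = |\pi(n)|$ is allowed in the definition, and it is terminal by definition. The map is surjective onto terminal nodes, since every terminal node has the form $v_S$ with $S = \pi(n)$ for some $n$.

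For injectivity, suppose $v_{\pi(n)} = v_{\pi(n')}$. Nodes are indexed by sets, so $\pi(n) = \pi(n')$. The decoding step then gives $n = d_v r_v^\star = n'$, because the right-hand side depends only on the node $v$.

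\textbf{Main obstacle.} The only delicate point is making sure that the stored divisor is exactly $d_v$, with no repeated primes, and that the node is keyed by the set $S$ alone, independent of the ordering. Square-freeness handles the first issue, and the definition of $v_S$ handles the second.
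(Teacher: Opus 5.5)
Your proof is correct and follows the paper's argument. The stored divisor is stopping, so Proposition~\ref{prop:dmin} gives $n = d_v r_v^\star$; surjectivity holds by the definition of terminal nodes; and injectivity follows because $d_v r_v^\star$ depends only on $v$. Your extra checks are details the paper leaves implicit: square-freeness giving $d = d_v$, invertibility of $d_v$ modulo $\lambda_v$, and ruling out $\lambda_v = 1$.
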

\begin{proof}
The divisor $d_v$ stored for $n$ is stopping, so Proposition~\ref{prop:dmin} gives $n = d_v \cdot r_v^\star$.
The map is onto the terminal nodes by their definition, and it is one-to-one because $v_{\pi(n)} = v_{\pi(m)}$ gives $n = d_v r_v^\star = m$.
\end{proof}

When the tree stores $\dmin(n)$, every proper subset $S$ of $\pi(n)$ gives a divisor $d_S < \dmin(n)$ of $n$, which is not stopping, so in every order the path of $n$ ends at the first node whose divisor is stopping for $n$.
The primes of $r^\star$ are not on this path, so the decoder recovers only their product $r_v^\star$.
The stopping rule fails when the stored divisor is $n$ itself or is chosen heuristically (as we do in Section~\ref{sec:results}), since a proper subset of $\pi(n)$ can then give a stopping divisor.
Since stopping depends on $n$, a node can be terminal for one number and internal for a larger one, so $T$ marks its terminal nodes explicitly.

Unlike a wavelet tree, which represents a sequence, the prime wavelet tree represents the set $\mathcal{C}_B$.
A wavelet tree over $\mathcal{C}_B$ in numeric order would also record which child each element enters and would require an additional $\log_2 N!$ bits of storage.
The prime wavelet tree lists $\mathcal{C}_B$ in path order instead, and keeps only the shape of the tree, its terminal marks, and the primes at its nodes.
This also is a source of our compression, since we store a prefix shared by many paths only once.

\subsection{Encoding}\label{sec:pwt-enc}

The encoding $\mathrm{PWT}(T)$ lists $T$ in pre-order.
Each node writes its number of children and, if it is internal and not the root, whether it is terminal.
Each child then follows as the gap from its previous sibling, and then its own subtree.
In the value-ordered tree, gaps at depth 1 count odd integers, and deeper gaps count odd primes, which the decoder unranks with a bitvector of the primes below $B^{1/3} = 10^8$ for our tabulation.
This suffices because only the largest prime of $\dmin(n)$ can exceed $n^{1/3}$: if $p > q$ are its two largest primes, then $\lambda(n/q) \ge p - 1 > q$, so $n/q$ is stopping, hence $n/q \ge \dmin(n) \ge pq$ and $q^3 < pq^2 \le n < B$.

Figure~\ref{fig:algo} provides pseudocode for the encoding and decoding of $\mathrm{PWT}(T)$.

\begin{theorem}\label{thm:pwt}
$\mathrm{PWT}(T)$ with $t$ nodes decodes to $\mathcal{C}_B$ in polynomial time per node.  
\end{theorem}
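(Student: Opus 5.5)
The plan is to show two things. First, a pre-order decoder reconstructs $T$ exactly from $\mathrm{PWT}(T)$. Second, at each node the work is polynomial in $\log B$. Lemma~\ref{lem:decode} then turns the reconstructed tree into $\mathcal{C}_B$.

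\emph{Reconstructing $T$.} I would argue by induction on the pre-order position that the decoder recovers, for each node $v$, its prime $p_v$, its number of children, and its terminal mark. The decoder keeps a stack holding, for every node on the current root-to-node path, the number of children not yet read and the previous sibling's prime. It reads the child count of the current node, and then the terminal bit if the node is internal and not the root. Leaves need no bit, since every leaf is terminal. It then reads the first child's gap and recurses into that child, returning to the parent afterwards to read the next gap.

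A gap is turned back into a prime as follows. At depth $1$ the decoder counts odd integers from the previous sibling (or from the start), which is just arithmetic. At deeper levels it unranks among odd primes using the bitvector of primes below $B^{1/3}$. This is valid because of the observation just before Figure~\ref{fig:algo}: only the largest prime of $\dmin(n)$ can exceed $n^{1/3}$. In the value-ordered tree that largest prime is the depth-$1$ prime, so every deeper prime lies below $B^{1/3}$.

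Because the encoder writes exactly these fields in the same pre-order, the recursion consumes the stream in lockstep with it. The reconstructed shape, primes and terminal marks are therefore those of $T$. If a heuristic divisor is stored instead of $\dmin(n)$, the deeper primes might not all lie below $B^{1/3}$. I would handle this by noting that the claim only needs the unranking table to cover the primes actually used, which the encoder can guarantee.

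\emph{Cost per node.} Along the path the decoder maintains $d_v = d_{\mathrm{parent}}\cdot p_v$ and $\lambda_v = \mathrm{lcm}(\lambda_{\mathrm{parent}}, p_v - 1)$. Each update is one multiplication and one gcd on numbers of $O(\log B)$ bits, because $d_v \mid n < B$ and $\lambda_v \mid n-1$. At a terminal node it computes $r_v^\star = d_v^{-1} \bmod \lambda_v$ by the extended Euclidean algorithm; the inverse exists because $\gcd(d_v,\lambda_v)=1$, as follows from Lemma~\ref{lem:alpha}. It then outputs $n = d_v r_v^\star$. By Lemma~\ref{lem:decode} this yields every element of $\mathcal{C}_B$ exactly once. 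If the factorization of $n$ is also wanted, the primes of $d_v$ are the path itself, and $r_v^\star$ is factored in polynomial time by \cite[Algorithm 1]{shallue-webster2025}, since $d_v r_v^\star$ is a CN.

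\emph{Main obstacle.} The main obstacle is bounding the cost of reading a gap. An Elias-type integer code costs $O(\log B)$ bits. Unranking a prime from its rank with the bitvector, however, is polynomial per node only if the bitvector carries a select structure; a linear scan would not be polynomial in $\log B$. I would state that the precomputed prime bitvector supports select in constant or logarithmic time. It is a fixed table, built once and independent of $t$, so charging it as preprocessing keeps the per-node cost polynomial. The total decoding time is then $t$ times a polynomial in $\log B$.
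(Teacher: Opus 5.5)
Your proposal is correct and follows the same route as the paper's proof. Both use a single pre-order pass that unranks each node's prime and maintains $d_v$ and $\lambda_v$ by one multiplication and one lcm; at terminal nodes both compute $r_v^\star = d_v^{-1} \bmod \lambda_v$ by the extended Euclidean algorithm, output $n = d_v r_v^\star$, and optionally factor $r_v^\star$ in polynomial time. You also supply details the paper leaves implicit, and these are sound: lockstep parsing of the stream, why a prime table below $B^{1/3}$ suffices, why the inverse exists, and the need for a precomputed select structure so that unranking costs polynomial time per node.
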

\begin{proof}
One pre-order pass, visits children in sibling order and unranks each node's prime $q$.
We can then write $d_{vq} = d_v q$ via standard multiplication. 
The Euclidean algorithm computes $\lambda_{vq} = \mathrm{lcm}(\lambda_v, q - 1)$ and $r^\star \equiv d_v^{-1} \bmod{\lambda_v}$.  
Another multiplication computes $n = d_v \cdot r^\star$.
If the prime factors of $r^\star$ are needed, these can also obtained in polynomial time by Algorithm 1 of \cite{shallue-webster2026}.
\end{proof}

\begin{figure}[t]
{\footnotesize
\algrenewcommand\algorithmicindent{0.8em}%
\algrenewcommand\alglinenumber[1]{{\scriptsize #1:}}%
\newcommand{\ctxa}{{\scriptstyle h,\,\mathrm{bits}(p_v)}}%
\newcommand{\ctxg}{{\scriptstyle h{+}1,\,\mathrm{bk}(c),\,\mathrm{bits}(p_v)}}%
\noindent\rule{\linewidth}{0.8pt}\par
\noindent\makebox[0.492\linewidth][l]{\textbf{Algorithm 1}\enspace\textsc{PWT-Encode}$(\mathcal{C}_B, B^{1/3})$}\hfill
\makebox[0.492\linewidth][l]{\textbf{Algorithm 2}\enspace\textsc{PWT-Decode}$(\mathrm{PWT}(T), B^{1/3})$}\par\vspace{1pt}
\noindent\rule{\linewidth}{0.4pt}\par\vspace{-0.3em}
\noindent\hbox to \linewidth{%
\begin{minipage}[t]{0.492\linewidth}
\begin{algorithmic}[1]
\State $\pi(n) \gets$ the primes of $\dmin(n)$
\State list each $\pi(n)$ decreasing; sort; build $T$
\State sieve the odd primes $< B^{1/3}$ into a bitmap
\State $\mathrm{idx}(q) \gets$ number of odd primes $\le q$
\State \Call{EmitNode}{$v_\emptyset$, $0$, $\bot$}
\Procedure{EmitNode}{$v$, $h$, $p_v$}
  \State $c \gets$ number of children of $v$
  \State $\textsc{Emit}_{\ctxa}(c)$
  \State \textbf{if} $c, h > 0$ \textbf{then} $\textsc{Emit}_{\scriptstyle h}(\mathrm{terminal}(v))$
  \State $\mathrm{prev} \gets 1$ \textbf{if} $h = 0$ \textbf{else} $0$
  \ForAll{child primes $q$ of $v$, increasing}
    \State \textbf{if} $h = 0$ \textbf{then} $g \gets (q - \mathrm{prev})/2$;\ $\mathrm{prev} \gets q$
    \State \textbf{else} $g \gets \mathrm{idx}(q) - \mathrm{prev}$;\ $\mathrm{prev} \gets \mathrm{idx}(q)$
    \State $\textsc{EmitLenRaw}_{\ctxg}(g)$
    \State \Call{EmitNode}{$vq$, $h{+}1$, $q$}
  \EndFor
\EndProcedure
\end{algorithmic}
\end{minipage}\hfill{\color{black!25}\vrule}\hfill
\begin{minipage}[t]{0.492\linewidth}
\begin{algorithmic}[1]
\State sieve the odd primes $< B^{1/3}$ into a bitmap
\State $\mathrm{sel}(j) \gets$ the $j$-th odd prime
\State \Call{DecodeNode}{$0$, $\bot$, $1$, $1$}
\Procedure{DecodeNode}{$h$, $p_v$, $d$, $\lambda$}
  \State $c \gets \textsc{Read}_{\ctxa}()$
  \State $\tau \gets \textsc{Read}_{\scriptstyle h}()$ \textbf{if} $c, h > 0$ \textbf{else} $(c = 0)$
  \State \textbf{if} $\tau$ \textbf{then} output $n \gets d \cdot (d^{-1} \bmod \lambda)$
  \State $\mathrm{prev} \gets 1$ \textbf{if} $h = 0$ \textbf{else} $0$
  \For{$i \gets 1$ \textbf{to} $c$}
    \State $g \gets \textsc{ReadLenRaw}_{\ctxg}()$
    \State \textbf{if} $h = 0$ \textbf{then} $q \gets \mathrm{prev} + 2g$;\ $\mathrm{prev} \gets q$
    \State \textbf{else} $\mathrm{prev} \gets \mathrm{prev} + g$;\ $q \gets \mathrm{sel}(\mathrm{prev})$
    \State \Call{DecodeNode}{$h{+}1,\, q,\, dq,\, \mathrm{lcm}(\lambda, q{-}1)$}
  \EndFor
\EndProcedure
\end{algorithmic}
\end{minipage}}\par\vspace{7pt}
\noindent\rule{\linewidth}{0.8pt}}
\caption{\label{fig:algo}%
Encoder and decoder of the value-ordered prime wavelet tree.
Here $d = d_v$ and $\lambda = \lambda_v$; subscripts are rANS contexts; \textsc{EmitLenRaw}/\textsc{ReadLenRaw} code a gap as its bit length and raw bits; counts $c \ge 4095$ escape to an Elias gamma code~\cite{elias1975} of $c - 4094$; $\mathrm{bits}(\cdot)$ is a bit length ($\mathrm{bits}(\bot) = 0$); and $\mathrm{bk}(c) \in \{1, 2-3, 4-15, 16{+}\}$ buckets the child count.}
\end{figure}

\subsection{The frequency-ordered prime wavelet tree}\label{sec:pwt-freq}

As we have mentioned earlier, wavelet trees can be shaped (in a Huffman ordering, for example) to 
reduce its overall query time.
In the same vein, we put frequent primes near the root of the prime wavelet tree to reduce the total number of nodes.
Hence, we can group together frequent small primes across many subtrees and encode them once.

The \emph{frequency-ordered} version of a prime wavelet tree ranks the primes by their \emph{path count}, the number of paths containing them, in decreasing order, with ties going to the larger prime.
Its depth and sibling orders are both increasing rank, similar to the FP-tree~\cite{hpy2000}, so every child ranks after its parent.
Lemmas~\ref{lem:alpha} and~\ref{lem:decode} are true for every ordering~$\prec$.
The prime of rank~$k$ labels at most $2^{k-1}$ nodes, one per subset of the $k-1$ primes ranked before it.
On the full table, $\dmin$ shrinks the total number of tree nodes by 41\%.

Children are now sorted by rank, so $\mathrm{PWT}(T)$ counts gaps over ranks at every depth, measuring a first child's gap from its parent's rank.
The decoder still needs the primes to form $d_v$ and $\lambda_v$ and reads them from a \emph{rank table}.
Within a \emph{count class}, a maximal run of ranks with equal path count, the tie rule sorts the primes, so the table stores the class sizes (not the counts) and each class's primes in increasing order as gaps, coded like tree gaps.

\section{Results}\label{sec:all_methods}\label{sec:results}

We measure all eight combinations of the three switches (\sw{A}, \sw{B}, and \sw{C}) in Table~\ref{tab:abc}.
For scale, the text file takes 476.4 bits per CN and \textsc{gzip -9} takes 187 bits per CN.
A CN in fixed width takes 80, and encoding an arbitrary set of $N$ integers below $10^{24}$ needs $\lceil \log_2\binom{10^{24}}{N} \rceil$ bits overall, or 52.97 per element.


Switch \sw{A} replaces each $n$ by $\dmin(n)$. 
Switch \sw{B} stores each number's prime set $\pi(n)$ as a path in the frequency-ordered prime wavelet tree. 
The path will hold the primes of $\dmin(n)$ when \sw{A} is on and all primes of $n$ when it is off.
Switch \sw{C} applies the coding techniques from Table~\ref{tab:tricks}.
Without it, every stored integer is a single VByte code.
All trees in Table~\ref{tab:abc} are frequency-ordered
Cells \sw{101}, \sw{010}, \sw{110}, \sw{011}, and \sw{111}, and the PWT + $d^{\ast}$ row are stored files.
The other three cells were encoded and decoded in memory.


When \sw{C}~$=0$, every stored integer is one VByte code.
A list stores the gaps between consecutive values, and a tree stores one VByte per node (twice its child count, plus its terminal flag), one per child (its rank gap), and its rank table as VByte gaps between the primes of each count class.
With \sw{C}~$=1$, these codes are replaced by the tricks in Table~\ref{tab:tricks}, each built into a decodable file.

\begin{table}[H]
\begin{center}
\caption{\label{tab:tricks}%
Savings of each bit-reduction trick in bits per CN.}
\vspace{4pt}
{\renewcommand{\baselinestretch}{1}\small\setlength{\tabcolsep}{4pt}%
\begin{tabular}{@{}lllrr@{}}
\toprule
 & trick & exploits & \sw{A}~$=0$ & \sw{A}~$=1$ \\
\midrule
\emph{lists} & interpolative coding & clustering of the values & 8.91 & 4.84 \\
 & parity & every value is odd & 1.00 & 1.00 \\
 & residue classes mod $M$ & non-uniform residues (Korselt) & 11.31 & 1.01 \\
\midrule
\emph{trees} & rANS, length $+$ raw bits & skewed counts, flags, gap lengths & 32.15 & 15.90 \\
 & richer contexts & prefix product, siblings, expected gap & 2.14 & 1.90 \\
 & gap-mantissa models & likely next rank after a frequent one & 0.92 & 0.15 \\
\bottomrule
\end{tabular}}
\end{center}
\end{table}

\emph{Lists.}
A list is split into residue classes modulo $M$, and class $r$ stores the quotients $(v-r)/M$ of its values $v$ by binary interpolative coding~\cite{moffat-stuiver2000}.
The split requires only a directory of class sizes and end values, since a decoded value determines its own class.
The best of 24 different moduli we tried is $M = 10{,}810{,}800 
$  for $n$ and $M = 2310 
$ for $\dmin$.

\emph{Trees.}
Each gap is split into its bit length and its \emph{mantissa}, the bits below its leading one.
A static rANS coder~\cite{duda2013,moffat-petri2020} codes the counts, terminal flags, and bit lengths, with one frequency table per context.
Each gap is measured from its \emph{reference rank}, which is either the previous sibling's rank or the parent's for a first child.
The basic contexts are the depth, the child count (in buckets), and the bit length of the reference rank.
Richer contexts also extend these with the bit length of the prefix product $d_v$, whether the child is first, the number of siblings left, and the expected gap (which is the bit length of the unused rank range per remaining child).
Mantissas are written raw unless a gap-mantissa model encodes them.
For gaps longer than 12 bits, the gap-mantissa model only writes the top four bits.
Each model is keyed by the reference rank if it is the root's or among the 1{,}023 most frequent, and by a coarser bucket otherwise.

\begin{table}[H]
\begin{center}
\caption{\label{tab:abc}%
The \sw{A}/\sw{B}/\sw{C} matrix on the full table of $308{,}279{,}939$ CNs.
Times are single-threaded minutes on an Intel Core i9-9900K.
\emph{Build} is the construction time of its encoding.
\emph{Decode} recovers every CN, alone or with its factorization.
The lists of $n$ decode in sorted order.
Sorting the others costs 0.8 minutes more.
$^\dagger$The $\dmin$ list decodes in 0.03 and 0.61 minutes but the bulk of the time comes from factoring every $d$.
$^\ddagger$The heuristic model~$d^{\ast}$ in place of $\dmin$.
$^\S$Wall clock time for 15 threads.}
\vspace{4pt}
{\renewcommand{\baselinestretch}{1}\renewcommand{\arraystretch}{1.35}\small
\setlength{\tabcolsep}{2.1pt}\setlength{\aboverulesep}{0pt}\setlength{\belowrulesep}{0pt}%
\definecolor{bitsbg}{rgb}{0.80,0.88,0.97}\definecolor{bitsdk}{rgb}{0.55,0.71,0.91}%
\definecolor{bldbg}{rgb}{0.86,0.95,0.84}\definecolor{blddk}{rgb}{0.62,0.84,0.60}%
\definecolor{decbg}{rgb}{1.00,0.96,0.85}\definecolor{decdk}{rgb}{0.98,0.80,0.50}%
\newcommand{\unit}[2]{{\renewcommand{\arraystretch}{1}\begin{tabular}[c]{@{}c@{}}\textbf{#1}\\(#2)\end{tabular}}}%
\newcommand{\two}[2]{{\renewcommand{\arraystretch}{1}\begin{tabular}[c]{@{}c@{}}\textbf{#1}\\\textbf{#2}\end{tabular}}}%
\newcommand{\hdS}[1]{\multicolumn{1}{>{\columncolor{bitsbg}}c}{#1}}
\newcommand{\hdB}[1]{\multicolumn{1}{>{\columncolor{bldbg}}c}{#1}}
\newcommand{\hdD}[1]{\multicolumn{1}{>{\columncolor{decbg}}c}{#1}}
\newcommand{\hdDr}[1]{\multicolumn{1}{>{\columncolor{decbg}}c|}{#1}}
\newcommand{\blank}{\multicolumn{1}{c}{}}
\newcommand{\blankr}{\multicolumn{1}{c|}{}}
\newcommand{\z}{\phantom{0}}%
\newcommand{\dg}{\rlap{$^\dagger$}\z}
\begin{tabular}{l|c>{\columncolor{bitsbg}}c>{\columncolor{bldbg}}r>{\columncolor{decbg}}r>{\columncolor{decbg}}r|c>{\columncolor{bitsbg}}c>{\columncolor{bldbg}}r>{\columncolor{decbg}}r>{\columncolor{decbg}}r}
\toprule
 & & \hdS{\unit{Space}{bits}} & \hdB{\unit{Build}{min}} & \multicolumn{2}{>{\columncolor{decbg}}c|}{\unit{Decode}{min}}
 & & \hdS{\unit{Space}{bits}} & \hdB{\unit{Build}{min}} & \multicolumn{2}{>{\columncolor{decbg}}c}{\unit{Decode}{min}} \\
\multicolumn{1}{c|}{} & \textbf{\sw{ABC}} & \hdS{\two{per}{CN}} & \hdB{\two{oracle}{to file}} & \hdD{\two{CNs}{only}} & \hdDr{\two{CNs +}{primes}}
                      & \textbf{\sw{ABC}} & \hdS{\two{per}{CN}} & \hdB{\two{oracle}{to file}} & \hdD{\two{CNs}{only}} & \hdD{\two{CNs +}{primes}} \\
\midrule
raw $n$       & \sw{000} & 61.31 & \textbf{0.43} & \textbf{0.05} & 33.4\z        & \sw{001} & 40.10 & 1.65    & 0.76     & 34.1\z \\
$\dmin$       & \sw{100} & 31.67 & 158.0\z       & 214.5\dg      & 223.4\z       & \sw{101} & 24.82 & 159.1\z & 215.1\dg & 224.0\z \\
PWT           & \sw{010} & 67.24 & 7.75          & 0.30          & \textbf{0.30} & \sw{011} & 32.03 & 12.7\z  & 2.81     & 2.81 \\
PWT + $\dmin$ & \sw{110} & 33.65 & 167.1\z       & 1.31          & 10.1\z        & \sw{111} & 15.70 & 168.5\z & 2.15     & 11.0\z \\
\midrule
PWT + $d^{\ast}$ & \blank & \blank & \blank & \blank & \blankr & \sw{111}$^\ddagger$ & \cellcolor{bitsdk}\textbf{15.28} & \cellcolor{blddk}2011\rlap{$^\S$}\phantom{.00} & \cellcolor{decdk}2.12 & \cellcolor{decdk}10.8\z \\
\bottomrule
\end{tabular}}
\end{center}
\end{table}

We highlight some of the interesting features that can be seen in the table above.
\begin{itemize}
\item \sw{A} saves space but costs factoring time.  The space usage in \sw{10X} compared to \sw{00X} is significantly improved.   However, decoding \sw{10X} uses generic factorization algorithms on $\dmin$, so the cost is no longer polynomial time per element.  
\item We can see the effect of polynomial time factoring of CNs by looking at the ``only'' column versus ``+ primes'' columns.  The difference is relatively small and the largest difference is seen in \sw{00X} where $n$ itself is factored.  In the \sw{10X} cases, we can see that $\dmin$ is slowly factored and then a polynomial time computation recovers the prime factors of $r^\star$.  And in the \sw{11X} cases, $\dmin$ comes in factored form, so the the cost to build $\lambda(\dmin)$ is comparable to the cost of factoring $r^\star$.  
\item The heuristic model trades build time for space.  Compare the two lines for \sw{111} and see that the dynamic recomputation of stopping numbers is computationally expensive but gains about $0.4$ bits per number.  
\end{itemize}

\section{Conclusion}\label{sec:conclusion}

Our smallest encoding stores all $308{,}279{,}939$ CNs below $10^{24}$ in $588.7$ megabytes, or 15.28 bits per CN and can recover its full factorization in 11 minutes on a single core.
If we are not willing to pay the high build cost to compute the set of $d^\ast$ divisors, we could build it in under 3 hours with a nominal space increase, or in about double the space in under 13 minutes.

Several problems remain open.
The first is how best to choose the divisors.
Picking the best stopping number so that the prefix tree of the chosen sets, under one common order, has the fewest nodes is NP-hard in general, by a reduction from vertex cover~\cite{karp1972}. Each edge $xy$ of a graph becomes an element whose candidates are $\{x, \ell_{xy}\}$ and $\{y, \ell_{xy}\}$.
Second, the tree itself has room to shrink.
The frequency-ordered tree over $\dmin$ requires 1.63 nodes per CN, compared to a floor of 1.



\subsection*{Acknowledgments}
The software implementations in this paper were done by Claude and verified by us.
We used Claude as a sounding board as we wrote this paper, but the words were written by us.
Tables, pseudocode, and figures were generated by Claude.
The authors have vetted all results and we take full responsibility for the content of this paper.

\Section{References}

\end{document}